\newcommand{\esp}{\hspace{0.05cm}}
\theoremstyle{definition}
\newtheorem{thm}{Theorem}[section]
\newtheorem{prop}[thm]{Proposition}
\newtheorem{lem}[thm]{Lemma}
\newtheorem{rem}[thm]{Remark}
\begin{document}

\date{}
\author{Andr\'es Navas}
%\footnote{Partially funded by the Math-AMSUD Project DySET.}}

\title{An $L^1$ ergodic theorem with values in a nonpositively curved space 
via a canonical barycenter map}
\maketitle

\vspace{-0.15cm}

The extension of classical ergodic theorems to a geometric --nonpositively curved-- 
setting has been one of the most fascinating developments of Ergodic Theory over 
the last years; see \cite{KL} for a nice survey containing most of the relevant 
results for functions (cocycles) taking values in isometry groups. 

In a different though related direction, A.~Es-Sahib and H.~Heinich proved in 
\cite{EH} an ergodic type theorem for $L^1$ i.i.d. random variables taking values 
in a nonpositively curved space. An analogous result for $L^2$ i.i.d. random variables  
was given by K.-T.~Sturm in \cite{sturm}. Recently, T.~Austin proved a 
nice extension of Sturm's result to arbitrary measure-preserving actions of amenable 
groups (see \cite{austin}). Unfortunately, Austin's 
$L^2$-setting is not the most appropriate one in view of that the most powerful framework 
of the ergodic theorem is that of $L^1$ spaces. In this work, we prove a general ergodic 
theorem for $L^1$ functions taking values in nonpositively curved spaces, where the 
notion of Birkhoff sums is replaced by that of barycenters along the orbits.

Let us begin by recalling a classical construction. Given a complete CAT(0)-space $(X,d)$, 
we consider the space $P^2(X)$ of probability measures with finite second moment, that is, 
$$\int_{X} d(x,y)^2  \esp d \mu(y) < \infty$$
(this condition does not depend on the point $x \!\in\! X$). 
Following Cartan (see for instance \cite{jost}), to each $\mu \!\in\! P^2 (X)$ one may associate 
a {\em barycenter} $bar(\mu)$, namely the unique point that minimizes the function 
$$x \to \int_{X} d(x,y)^2 \esp d\mu(y).$$
A crucial property of $bar \!\!: P^2 (X) \!\to\! X$ is that 
it is 1-Lipschitz for the $2$-Wasserstein metric \cite{sturm}:   
$$d \big( bar (\mu_1),bar ({\mu_2}) \big) \leq W_2 (\mu_1,\mu_2) 
:= \inf_{\nu \in (\mu_1|\mu_2)} \sqrt{\int_{X \times X} d(x,y)^2 \esp d \nu (x,y)},$$
where $(\mu_1|\mu_2)$ denotes the set of all probability measures $\nu$ on $X \times X$ 
that project into $\mu_1$ and $\mu_2$ on the first and the second factor, respectively 
(see \cite{villani} for more details on this metric).

The first task of this work was to introduce an analogous notion for 
the space $P^1(X)$ of probability measures with finite first moment:
$$\int_{X} d(x,y)  \esp d \mu(y) < \infty.$$
It was after we developed a notion of barycenter adapted to our needs that we 
discovered the equivalent construction of \cite{EH}. We decided to include our approach 
here because it is more elementary in that, unlike \cite{EH}, it does not rely on 
deep probabilistic results. Although this makes our computations a little bit 
more involved, it has the advantage of allowing us to avoid the (finite) local 
compactness hypothesis of \cite{EH} for the underling space, thus solving a problem 
formulated in \cite[Example 6.5]{sturm}. Summarizing, let $(X,d)$ be a complete 
metric space with nonpositive curvature in the sense of Buseman 
(a {\em Buseman space}, for short). Assuming that $X$ is separable, 
in \S \ref{baricentro} we construct a map $bar^{\star}\! : P^1 (X) \to X$ 
that is 1-Lipschitz for the 1-Wasserstein metric:
$$d \big( bar^{\star} (\mu_1), bar^{\star} ({\mu_2}) \big) \leq W_1 (\mu_1,\mu_2) 
:= \inf_{\nu \in (\mu_1|\mu_2)} \int_{X \times X} d(x,y) \esp d \nu (x,y).$$ 
By elementary reasons, this also applies to any separable Banach space, where 
geodesic are understood as being segments of lines. 

The map constructed above is equivariant with respect to the natural action of 
isometries. At the end of \S \ref{baricentro}, we give an application of this fact,   
namely, we prove that every compact group of isometries of a Buseman space has 
a fixed point. The novelty here is that we do not assume any hypothesis of 
strict convexity (having such an hypothesis, the result is elementary and 
well-known).  

\vspace{0.2cm}

We next enter into the goal of this work. Given an amenable group $G$ with 
a measure-preserving action $T$ on a probability space $(\Omega, \mathcal{P})$, 
let $(F_n)$ be a {\em tempered} F\o lner sequence in $G$, that is, a F\o lner 
sequence for which there exists $C > 0$ such that for all $n \in \mathbb{N}$, 
$$m_G \Big( \bigcup_{k < n} F_k^{-1} F_n \Big) \leq C m_G(F_n),$$
where $m_G$ denotes the left Haar measure on $X$. Let $\varphi: \Omega \to X$ 
be a measurable function lying in $L^1 (\mathcal{P},X)$, that is, such that 
for some (equivalently, all) $x \in X$,
$$\int_{\Omega} d \big( \varphi(\omega),x \big) \esp d \mathcal{P} (\omega) < \infty.$$ 
Notice that $L^1 (\mathcal{P},X)$ becomes a metric space when endowed with the distance
$$d_1 (\varphi,\psi) := \sqrt{\int_{\Omega} 
d \big(\varphi(\omega),\psi(\omega) \big) \esp d \mathcal{P} (\omega)}.$$

\vspace{0.1cm}

\noindent{\bf Main Theorem.} {\em With the notation above, assume that $X$ is either 
a separable Banach space or a separable Buseman space. Then}
$$\omega \mapsto bar^{\star} \left( \frac{1}{m_G (F_n)} \int_{F_n} 
\delta_{\varphi(T^{^g} \omega)} \esp d m_G (g)\right)$$
{\em is a sequence of maps that converges pointwise and in $L^1 (\mathcal{P},X)$ 
to a $T$-invariant function from $\Omega$ to $X$.}

\vspace{0.5cm}

For Banach spaces, the barycenter of a measure \esp 
$\frac{1}{m} \big( \delta_{x_1} + \cdots + \delta_{x_m} \big)$ \esp 
is just the Dirac measure concentrated at the point 
\esp $\frac{1}{m} (x_1 + \cdots + x_m)$. \esp In particular, 
when $G \sim \mathbb{Z}$, $X = \mathbb{R}$ and $F_n = \{0,\ldots,n-1\}$, 
the theorem reduces to the classical (invertible) Birkhoff ergodic 
theorem for $\varphi \in L^1 (\mathcal{P},\mathbb{R})$.

The proof of the Main Theorem uses the general strategy of \cite{austin}, 
namely the contractivity properties 
of the barycenter maps transforms the desired convergence into that of suitable 
sequences of real-valued functions to which Lindenstrauss' pointwise ergodic 
theorem \cite{lindenstrauss} applies. Recall that in the setting of \cite{austin}, 
the probability measure lies in $P^2 (X)$ and one considers functions 
$\varphi: \Omega \to X$ lying in the space $L^2 (\mathcal{P},X)$, 
that is, such that for some (equivalently, all) $x \in X$,
$$\int_{\Omega} d \big( \varphi(\omega),x \big)^2 
\esp d \mathcal{P}(\omega) < \infty.$$ 
This space may be naturally endowed with the distance
$$d_2 (\varphi,\psi) := 
\int_{\Omega} d \big( \varphi(\omega),\psi(\omega) \big)^2 
\esp d \mathcal{P}(\omega).$$
Austin's theorem then asserts that for every \esp 
$\varphi \in L^2 (\mathcal{P},X)$, \esp the sequence of maps
\begin{equation}\label{austin-stat}
\omega \mapsto bar \left( \frac{1}{m_G (F_n)} \int_{F_n} 
\delta_{\varphi(T^{^g} \omega)} \esp d m_G (g)\right)
\end{equation}
converges pointwise and in $L^2 (\mathcal{P},X)$ to 
a $T$-invariant function from $\Omega$ to $X$. 

Quite interestingly, Austin's theorem is not a consequence of our Main Theorem. Indeed, 
although --as in the classical case-- our theorem extends to an $L^p$-version  by a 
straightforward and well-known argument, the barycenters $bar$ and $bar^{\star}$ 
may differ, even for very nice spaces; see Remark \ref{raro}. Despite of this, 
the map $bar$ is also 1-Lipschitz for the 1-Wasserstein metric; see 
\cite[Proposition 4.3]{sturm}. Using the methods of \S \ref{proof}, 
this allows showing that the convergence of the sequence of maps 
(\ref{austin-stat}) actually holds in $L^1 (\mathcal{P},X)$. We point out 
that this still holds for probability measures in $P^1(X)$ for a clever 
modification of Cartan's barycenter (see \cite[Proposition 4.3]{sturm}). 

\vspace{0.35cm}

\begin{small}

\noindent{\bf Acknowledgments.} It is a pleasure to thank A.~Karlsson for useful hints 
and references concerning fixed points for actions on Buseman spaces, J.~Bochi for 
inspiring discussions on the barycenter map, and K.-T.~Sturm for a clever remark. 

This work 
was funded by a Fondecyt Research Project and the Math-AMSUD Research Project DySET. 
\end{small}

%%%%%%%%%%%%%%%%%%%%%%%%%%%%%%%%%%%%%%%%%%%%%%%%%%%%%%%%%%%%%%%%%%%%%%%%%%%%%%%%%%%%%%

\section{The barycenter map}
\label{baricentro}

\hspace{0.5cm} For a Banach space $X$, a natural definition of barycenter 
of a measure $\mu \in P^1 (X)$ is 
$$bar^{\star} (\mu) := \int_X x \esp d\mu(x).$$
Notice that given $\mu_1,\mu_2$ in $P^1(X)$, for each $\nu \in (\mu_1|\mu_2)$ 
we have\\

\begin{eqnarray*}
\int_{X \times X} \|x - y\| \esp d\nu(x,y) 
&\geq& 
\left\| \int_{X \times X} (x-y) \esp d\nu (x,y) \right\|\\ 
&=& 
\left\| \int_{X \times X} x \esp d\nu (x,y) - \int_{X \times X} y \esp d\nu (x,y) \right\|\\ 
&=&
\left\| \int_{X} x \esp d (\pi_{\!{_1}} \! \nu) (x) - 
\int_{X} y \esp d (\pi_{\!{_2}} \! \nu) (y) \right\|\\
&=& 
\big\| bar^{\star} (\mu_1) - bar^{\star} (\mu_2) \big\|.
\end{eqnarray*}
As a consequence, 
$$\big\| bar^{\star} (\mu_1)- bar^{\star} (\mu_2) \big\| 
\leq W_1 (\mu_1,\mu_2).$$

A definition with an analogous property for nonpositively curved spaces is much 
more subtle. In what follows, $X$ will denote a Buseman space (separability 
will be needed later). Recall 
that this means that $X$ is geodesic and the distance function along geodesics is 
convex. Equivalently, given any two pairs of points $x,y$ and $x',y'$, their 
corresponding (unique) midpoints $m,m'$ satisfy
\begin{equation}\label{media}
d(m,m') \leq \frac{d(x,x')}{2} + \frac{d(y,y')}{2}.
\end{equation}

This property allows defining a barycenter $bar_n (x_1,\ldots,x_n )$ 
of any finite family $(x_1,\ldots,x_n)$ of (nonnecessarily distinct) 
points as follows. For $n = 1$, we let $bar_1 (x) := x$. For $n \!=\! 2$, 
we let $bar_2 (x,y)$ be the midpoint between $x$ and $y$. Now, assuming that 
the barycenters \esp $bar_n (\cdot,\ldots,\cdot)$ \esp of all families of $n$ points 
have been defined, we define $bar_{n+1} (x_1, \ldots, x_n, x_{n+1} )$ as follows: 
Starting with $(x_1,\ldots,x_{n+1})  =: (x_1^{(0)}, \ldots, x_{n+1}^{(0)} )$,
we replace each $x_i$ by the (already defined) barycenter of 
$(x_1, \ldots, x_{i-1}, x_{i+1}, \ldots, x_{n+1} )$. Then we do the same
with the resulting set $\{x_1^{(1)}, \ldots, x_{n+1}^{(1)} )$, thus yielding
a new set $(x_1^{(2)},\ldots,x_{n+1}^{(2)}\}$. Repeating this procedure and
passing to the limit along the Cauchy sequences $(x_i^{(k)})_{k \in \mathbb{N}}$, 
the corresponding set with collapse to a single point, that we call the 
barycenter of $(x_1,\ldots,x_{n+1})$. The proof of this convergence 
will be accomplished inductively together with the following
crucial relation:
\begin{equation}\label{needed-estimate}
d \big( bar_n (x_1,\ldots,x_n) , bar_n (x_1,\ldots,x_n) \big)
\leq \frac{1}{n} \sum_{i=1}^{n} d(x_i, y_i).
\end{equation}

First, for $n=2$, the barycenter is already defined, and (\ref{needed-estimate}) reduces 
to (\ref{media}). Now, assuming that we have showed the existence of the barycenter as 
well as inequality $(\ref{needed-estimate})$ for families of $n$ points, let us consider
a family $(x_1,\ldots,x_{n+1})$. For each $i \neq j$ in $\{ 1,\ldots,n+1 \}$, we have
$$d(x_i^{(1)}\!, x_j^{(1)}) = d \big( bar_{n} (x_1,...,x_{i-1},x_{i+1},...,x_{n+1}), 
bar_n (x_1,...,x_{j-1},x_{j+1},...,x_{n+1}) \big)
\leq \frac{d(x_i, x_j)}{n}.$$
Therefore,
$$diam \{ x_1^{(1)}, \ldots, x_{n+1}^{(1)} \} 
\leq \frac{1}{n} \esp diam \{x_1, \ldots, x_{n+1}\},$$
and more generally, for all $k \geq 1$,
$$diam \{ x_1^{(k)}, \ldots, x_{n+1}^{(k)} \} \leq
\frac{1}{n^k} \esp diam \{x_1, \ldots, x_{n+1}\}.$$
By this inequality and Lemma \ref{one} below, the diameter of the convex closure of 
$\{x_1^{(k)}, \ldots, x_{n+1}^{(k)}\}$ converges to zero as $k$ goes to infinite. 
Since $x_i^{(l)}$ belongs to this convex closure for all $l\geq k$, this shows 
that \esp $bar_{n+1} (x_1, \ldots, x_{n+1})$ \esp is well defined.

Next, take two families $(x_1,\ldots,x_{n+1})$ and $(y_1,\ldots,y_{n+1})$.
By the inductive hypothesis, for each index $i \in \{1,\ldots,n+1\}$,
$$d(x_i^{(1)}\!\!, y_i^{(1)}) = d \big( bar_n (x_1,..., x_{i-1}, x_{i+1},..., x_{n+1}), 
bar_n (y_1,..., y_{i-1}, y_{i+1},..., y_{n+1}) \big) 
\leq \frac{1}{n} \! \sum_{j \neq i} \! d(x_j, y_j).$$
Summing over all $i=1,\ldots,n+1$, this yields
$$\sum_{i=1}^{n+1} d(x_i^{(1)}, y_i^{(1)}) \leq \sum_{i=1}^{n+1} d(x_i, y_i).$$
More generally, for all $k \geq 1$,
$$\sum_{i=1}^{n+1} d(x_i^{(k)}, y_i^{(k)}) \leq \sum_{i=1}^{n+1} d(x_i^{(k-1)}, y_i^{(k-1)}) 
\leq  \ldots \leq \sum_{i=1}^{n+1} d(x_i, y_i).$$
Letting $k$ go to infinite, all the points $x_i^{(k)}$ (resp. $y_i^{(k)}$) converge 
to $bar_{n+1} (x_1,\ldots,x_{n+1})$ (resp. $bar_{n+1} (y_1 \ldots, y_{n+1})$. Hence, 
passing to the limit in the previous inequality, we obtain 
$$(n+1) \esp d \big( bar_{n+1} (x_1,\ldots,x_{n+1}), bar_{n+1} (y_1 \ldots, y_{n+1}) \big) 
\leq \sum_{i=1}^{n+1} d(x_i, y_i),$$
as we wanted to show.

\vspace{0.4cm}

\begin{lem} \label{one} 
{\em The diameter of the convex closure of every 
bounded subset of $X$ equals its own diameter.}
\end{lem}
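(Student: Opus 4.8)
The plan is to prove only the nontrivial inequality $\mathrm{diam}\big(\overline{\mathrm{conv}}(A)\big) \leq \mathrm{diam}(A)$ for a bounded set $A \subseteq X$, the opposite inequality being immediate since $A$ is contained in its convex closure. First I would realize the convex hull as an increasing union: set $\mathrm{conv}_0(A) := A$ and, inductively, let $\mathrm{conv}_{k+1}(A)$ be the union of all geodesic segments joining two points of $\mathrm{conv}_k(A)$. Since each point lies on its own trivial geodesic, these sets are nested; their union is convex (a geodesic between two points of the union lies in some $\mathrm{conv}_{k+1}(A)$) and is contained in every convex set containing $A$, hence it equals the convex hull, and $\overline{\mathrm{conv}}(A)$ is its closure. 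Because the diameter is unchanged under topological closure and $\mathrm{diam}\big(\mathrm{conv}(A)\big) = \sup_k \mathrm{diam}\big(\mathrm{conv}_k(A)\big)$, it suffices to prove that $\mathrm{diam}\big(\mathrm{conv}_{k+1}(A)\big) \leq \mathrm{diam}\big(\mathrm{conv}_k(A)\big)$ for every $k$; by induction this reduces to the single claim that $\mathrm{diam}\big(\mathrm{conv}_1(B)\big) \leq \mathrm{diam}(B) =: D$ for an arbitrary bounded $B \subseteq X$.

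For that claim, the key input is the following consequence of (\ref{media}): specializing it to $x' = y' = p$ shows that for any geodesic $\gamma\colon[0,1]\to X$ the function $t\mapsto d(\gamma(t),p)$ is midpoint-convex along $\gamma$ (using that sub-arcs of $\gamma$ are geodesics whose metric midpoint is at the parameter midpoint), hence convex by continuity, and therefore attains its maximum at an endpoint of $\gamma$. I would then pick $u, v \in \mathrm{conv}_1(B)$ with $u$ on a geodesic joining $x, y \in B$ and $v$ on a geodesic $\gamma'$ joining $x', y' \in B$, and apply this observation twice. First, with $p = v$ on the geodesic through $u$, to get $d(u,v) \leq \max\{ d(x,v), d(y,v)\}$; say $d(u,v) \leq d(x,v)$, the other case being symmetric. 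Then, with $p = x$ on the geodesic $\gamma'$, to get $d(x,v) \leq \max\{ d(x,x'), d(x,y')\} \leq D$, since $x, x', y'$ all lie in $B$. Combining, $d(u,v) \leq D$, which is the claim.

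The step I expect to require the most care is precisely this double use of convexity: one cannot directly compare two interior points of two distinct segments, but must first slide the free point $u$ onto a vertex of \emph{its} segment, and only afterwards bound the distance from that vertex to $v$ by sliding $v$ onto a vertex of \emph{its} segment — at which stage every relevant point lies in $B$. The remaining ingredients are routine: continuity together with midpoint-convexity gives convexity along geodesics, and since the diameter need not be attained one works with suprema throughout. I would also note that the argument uses neither separability nor any local compactness of $X$.
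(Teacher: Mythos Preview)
Your proof is correct and follows essentially the same route as the paper: both build the convex hull as the increasing union of iterated geodesic closures $B_{n+1}$ of $B_n$, reduce to showing $\mathrm{diam}(B_{n+1}) \le \mathrm{diam}(B_n)$, and establish this via a double application of the convexity of $t \mapsto d(\gamma(t),p)$ to push each interior point to an endpoint of its segment. The only cosmetic difference is that the paper applies convexity three times (bounding $d(x,y_0)$, $d(x,y_1)$, then $d(x,y)$) whereas you use a WLOG to get by with two.
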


\begin{proof} An explicit inductive description of the convex closure of 
a bounded subset $B$ of $X$ ({\em i.e.} 
the smallest convex subset of $X$ containing $B$) proceeds as follows. Letting $B_0 := B$  
and having defined $B_1,\ldots, B_n$, we let $B_{n+1}$ be the union of all geodesics 
with endpoints in $B_n$. Then $B_n \subset B_{n+1}$, and the closure of the union 
$B_{\infty} := \bigcup_n B_n$ is the convex closure of $B$. Since $B_{\infty}$ 
contains $B$, we have $diam (B_{\infty}) \geq diam(B)$. To show the converse 
inequality, it suffices to show that for all $n \geq 0$, 
\begin{equation}\label{to-check}
diam(B_{n+1}) \leq diam(B_n).
\end{equation} 
To check this, given arbitrary points $x,y$ in $B_{n+1}$, we may find 
$x_0,x_1$ and $y_0,y_1$ in $B_n$ such that $x$ (resp. $y$) lies in the 
geodesic joining $x_0$ and $x_1$ (resp. $y_0$ and $y_1$). The convexity 
of the distance along geodesics shows that
$$d(x,y_0) \leq \max \big\{ d(x_0, y_0), d(x_1, y_0) \big\} \leq diam(B_n),$$ 
$$d(x,y_1) \leq \max \big\{ d(x_0, y_1), d(x_1, y_1) \big\} \leq diam(B_n).$$
Another application of this convexity then shows that 
$$d(x,y) \leq \max \big\{ d(x,y_0), d(x,y_1) \big\} \leq diam(B_n).$$
Since $x,y$ were arbitrary points of $B_{n+1}$, this shows (\ref{to-check}). 
\end{proof}

\vspace{0.25cm}

By the symmetry of the 
construction, for every permutation $\sigma$ of $\{1,\ldots,n\}$, 
$$bar_n (x_1, \ldots, x_n) = bar_n (x_{\sigma(1)}, \ldots, x_{\sigma(n)}).$$
Having this in mind, (\ref{needed-estimate}) implies that  
$$d \big( bar_n (x_1,\ldots,x_n), bar_n (y_1,\ldots,y_n) \big) \leq 
\frac{1}{n} \min_{\sigma \in S_n} \sum_{i=1}^{n} d(x_i, y_{\sigma(i)}).$$
The important observation here is that (by a theorem of Garrett Birkhoff; see 
\cite[Introduction]{villani}) the right-side expression above corresponds 
to the 1-Wasserstein distance between certain probability measures. More 
precisely,  
$$\frac{1}{n} \min_{\sigma \in S_n} \sum_{i=1}^{n} d(x_i, y_{\sigma(i)}) = 
W_1(\mu_1,\mu_2),$$
where $\mu_1 := \frac{1}{n} (\delta_{x_1} + \cdots + \delta_{x_n})$ and 
$\mu_2 := \frac{1}{n} (\delta_{y_1} + \cdots + \delta_{y_n}).$ In order 
to obtain a barycenter map that is 1-Lipschitz for the 1-Wasserstein 
metric, this would motivate to define the barycenter of 
\esp $\frac{1}{n}(\delta_{x_1} + \cdots + \delta_{x_n})$ \esp as 
$bar_n (x_1, \ldots ,x_n)$. However, such a definition is not 
intrinsic. For instance, though the $n$-set $(x_1,x_2,\ldots,x_n)$ and 
the $2n$-set $(x_1,x_1,x_2,x_2,\ldots,x_n,x_n)$ should be identified 
as measures, the points 
$bar_n (x_1,x_2,\ldots,x_n)$ and $bar_{2n} (x_1,x_1,x_2,x_2,\ldots,x_n,x_n)$ do not 
necessarily coincide. As a matter of example, the reader may easily check that for 
$X$ being a tripod of endpoints $x,y,z$ and edges of the same length $\ell$, the 
points $bar_4 (x,x,y,x)$ and $b_8(x,x,x,x,y,y,z,z)$ are different. (The former 
is at distance $\frac{7 \ell}{9}$ from $x$, while the second is at distance 
$\frac{2533 \ell}{3150}$ form the same vertex; see Figure 1.)

\vspace{0.45cm}

%%%%%%%%%%%%%%%%%%%%%%%%%%%%%%%%%%%%%%%%%%%%%%%%%%%%%%%%%%%%%%%%%%%%%%%%%%%%%%%%%%%%%%%%%%%%%%
\beginpicture

\setcoordinatesystem units <1cm,1cm>

\plot 0 0 0 2 /
\plot 0 0 -1.7 -1.33 / 
\plot 0 0 1.7 -1.33 /

\put{$x$} at 0.03 2.3
\put{$y$} at -1.7 -1.6 
\put{$z$} at 1.7 -1.6 

\put{Figure 1} at 0.1 -1.8 

\begin{footnotesize}
\put{$bar_4 (x,x,y,z) \longrightarrow$} at -1.7 0.49 
\put{$\longleftarrow bar_8 (x,x,x,x,y,y,z,z)$} at 2.2 0.35  
\end{footnotesize}

\begin{tiny}
\put{$\bullet$} at -0.34 0.49 
\put{$\bullet$} at -0.34 0.35
\end{tiny}

\put{} at -8 0

\endpicture

%%%%%%%%%%%%%%%%%%%%%%%%%%%%%%%%%%%%%%%%%%%%%%%%%%%%%%%%%%%%%%%%%%%%%%%%%%%%%%%%%%%%%%%%%%%%%%

\vspace{0.53cm}

To solve the problem above, we will slightly modify the definition of the barycenter 
of finite families of points so that it becomes invariant under the procedure 
--at the level of measures-- of ``subdivision of mass along the atoms''. 
Given an arbitrary family $Q = (x_1,\ldots,x_n)$ of points in $X$, we let 
$$Q^k := (x_1,\ldots,x_n,x_1,\ldots,x_n,\ldots,x_1,\ldots,x_n),$$ 
where the number of blocks is $k$. 

\vspace{0.15cm}

\begin{prop} \label{prop-cauchy} {\em The sequence of barycenters 
\esp $bar_{nk} (Q^k)$ \esp is a Cauchy sequence.}
\end{prop}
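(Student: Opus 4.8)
The plan is to reduce the statement to the summability
\[
\sum_{k\ge 1} d\big(bar_{nk}(Q^k),\, bar_{n(k+1)}(Q^{k+1})\big)<\infty,
\]
which immediately forces $\big(bar_{nk}(Q^k)\big)_{k}$ to be Cauchy.

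The first step is a structural observation. When the $nk$-point family $Q^k$ is read as $k$ consecutive copies of $Q=(x_1,\dots,x_n)$, the rearrangement step in the construction of $bar_{nk}$ — replacing each point by the barycenter of the $nk-1$ others — respects this block decomposition, since deleting a point with value $x_i$ from $Q^k$ always leaves the same multiset. Hence the procedure, started at $Q^k$, never leaves the set of $k$-fold block tuples, and $bar_{nk}(Q^k)$ is the common limit of the iterates, started at $Q$, of the map $U^{(k)}\colon X^n\to X^n$ defined by $\big(U^{(k)}V\big)_i=bar_{nk-1}\big(V^k\setminus\{v_i\}\big)$ (here $V^k$ is the $k$-fold repetition of $V$, and one copy of $v_i$ is deleted); likewise $bar_{n(k+1)}(Q^{k+1})$ is the common limit of the iterates of $U^{(k+1)}$ started at $Q$. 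Using $(\ref{media})$–$(\ref{needed-estimate})$ and the permutation-invariance of the maps $bar_N$, one checks that each $U^{(k)}$ fixes the diagonal of $X^n$ pointwise, is $1$-Lipschitz for $d_\infty(V,V')=\max_i d(v_i,v_i')$, and contracts the diameter of a tuple by the factor $1/(nk-1)$; moreover every point occurring lies in the convex closure $K$ of $\{x_1,\dots,x_n\}$, whose diameter equals $D:=diam\{x_1,\dots,x_n\}$ by Lemma \ref{one}. In particular the orbit of $Q$ under $U^{(k)}$ converges to its constant limit $\beta_k\mathbf 1$ with $d_\infty\big((U^{(k)})^jQ,\beta_k\mathbf 1\big)\le C\,(nk-1)^{-j}D$, and similarly for $U^{(k+1)}$.

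Set $\delta_k:=\sup_{V\in K^n} d_\infty\big(U^{(k)}V,U^{(k+1)}V\big)$. A three-term estimate along the two orbits of $Q$ — the $j$-th iterates of $U^{(k)}$ and $U^{(k+1)}$ are within $j\delta_k$ of one another (because $U^{(k)}$ is $1$-Lipschitz and the two maps are $\delta_k$-close pointwise) and each is within $C\,(nk-1)^{-j}D$ of its limit — gives, for every $j$,
\[
d\big(bar_{nk}(Q^k),bar_{n(k+1)}(Q^{k+1})\big)\ \le\ C\,(nk-1)^{-j}D+j\,\delta_k .
\]
Choosing $j=2$ yields $d\big(bar_{nk}(Q^k),bar_{n(k+1)}(Q^{k+1})\big)\le C'D/k^2+2\delta_k$; since $\sum_k 1/k^2<\infty$, the proposition will follow as soon as one establishes the single estimate
\[
\delta_k=O\!\left(\frac{D}{k^{2}}\right),\qquad\text{i.e.}\qquad
\sup_{V,i}\, d\Big(bar_{nk-1}\big(V^k\setminus\{v_i\}\big),\ bar_{n(k+1)-1}\big(V^{k+1}\setminus\{v_i\}\big)\Big)=O\!\left(\frac{D}{k^{2}}\right).
\]

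This is the heart of the matter, and the place where the computations become, in the words of the introduction, ``a little bit more involved''. Observe that the family defining $(U^{(k+1)}V)_i$ is the one defining $(U^{(k)}V)_i$ enlarged by exactly one faithful copy of $\{v_1,\dots,v_n\}$. A naive bound — iterating the elementary fact that adjoining one point to an $M$-point family displaces its barycenter by at most $D/M$ (proved by noting that one rearrangement step places the new point's slot at $bar_M$ of the old family and the remaining $M$ slots within $D/M$ of it, after which $1$-Lipschitzness and diagonal-invariance keep the whole orbit within $D/M$) — only gives $\delta_k=O(D/k)$, which is not summable. The required super-linear decay must be read off from the fact that the $n$ adjoined points form a copy of the base configuration $Q$, whose own iterated barycenter is exactly the point being approached, so that the perturbation is genuinely second order; in the Euclidean model one computes $\delta_k\sim \tfrac{n}{(nk)^{2}}\,\max_i d(v_i,\overline v)$. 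Turning this second-order cancellation into a rigorous estimate inside a Buseman space — with only the convexity inequalities $(\ref{media})$–$(\ref{needed-estimate})$ available and no linear structure to lean on — is the difficult step I would have to carry out.
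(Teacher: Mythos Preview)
Your reduction to the iteration of $U^{(k)}\colon X^n\to X^n$ is correct and elegant: the block symmetry does make the $nk$-fold barycenter iteration collapse to an $n$-dimensional one, the diagonal is fixed, the map is $1$-Lipschitz in $d_\infty$, and one does get the contraction $diam(U^{(k)}V)\le diam(V)/(nk-1)$. The three-term comparison is also fine, and it correctly isolates the whole difficulty in the single quantity $\delta_k$.

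The gap is exactly where you place it, and it is a genuine one. The ``second-order'' estimate $\delta_k=O(D/k^2)$ rests entirely on the Euclidean identity
\[
(U^{(k)}V)_i-(U^{(k+1)}V)_i=\frac{n(\bar v-v_i)}{(nk-1)(nk+n-1)},
\]
which is a linear cancellation: the $n$ adjoined points average to $\bar v$, and this mean is what the expression singles out. In a Buseman space there is no analogue of $\bar v$ and no such cancellation is available from~(\ref{media})--(\ref{needed-estimate}). If one uses the only tool that compares barycenters of tuples of different lengths, namely Lemma~\ref{three}, to pass from $bar_{nk-1}(V^k\setminus\{v_i\})$ to $bar_{nk+n-1}(V^{k+1}\setminus\{v_i\})$, one is led to average $d\big(bar_{nk-1}(V^k\setminus\{v_i\}),\,bar_{nk-1}(\cdot)\big)$ over all ways of deleting $n$ entries from $V^{k+1}\setminus\{v_i\}$. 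Each such term is bounded by $\frac{D}{nk-1}\sum_j |i_j-1|$, where $i_j$ is the number of $v_j$'s deleted. The point is that the distribution of $(i_1,\dots,i_n)$ does \emph{not} concentrate as $k\to\infty$: one is distributing only $n$ deletions among $n$ types, so $E\big[\sum_j|i_j-1|\big]$ stays bounded away from $0$ uniformly in $k$. The outcome is only $\delta_k=O(D/k)$, which you already knew and which is not summable. I see no route from the Buseman axioms to anything better for a \emph{single-step} increment.

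This is precisely why the paper does not attempt to control consecutive terms. Its proof makes macroscopic jumps $k\mapsto k+l$ with $l$ as large as $\sqrt{k}$, and applies Lemma~\ref{three} to delete $nl$ entries at once. Now the relevant random variable is a genuine hypergeometric with parameters growing in $l$, and it \emph{does} concentrate: $E|i_j-l|$ is of order $l^{\alpha}$ for any $\alpha>1/2$ (shown by a Stirling computation), yielding
\[
d\big(bar_{nk}(Q^k),\,bar_{n(k+l)}(Q^{k+l})\big)\le C D\,\frac{l^{3\alpha-1}}{k}\qquad(L\le l\le\sqrt{k},\ \tfrac12<\alpha<\tfrac23).
\]
One then telescopes along $\ell_{j+1}=\ell_j+[\sqrt{\ell_j}]$, so that each increment contributes $O\big(\ell_j^{-(3-3\alpha)/2}\big)$, which is summable since $3-3\alpha>1$. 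In short, the missing idea is that one must \emph{buy} concentration by taking long jumps; the hoped-for per-step cancellation is a linear phenomenon that the Buseman inequalities do not supply.
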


\vspace{0.15cm}

Assuming that this proposition holds, and since $X$ is 
supposed to be complete, we may define the (canonical) barycenter 
$$bar^{\star} \Big( \frac{1}{n} \big( \delta_{x_1} + \cdots + \delta_{x_n} \big) \Big)$$ 
as the limit point of the sequence \esp $bar_{nk} (Q^k)$. \esp Indeed, one readily 
checks that this limit point depends only on the corresponding measure and not on 
any particular way of writing it as a equally weighted mean of Dirac measures (with 
nonnecessarily different atoms). Moreover, we still have the crucial relation 
$$d \left(bar^{\star} \Big( \frac{1}{n}(\delta_{x_1} + \cdots + \delta_{x_n}) \Big), 
bar^{\star} \Big( \frac{1}{n}(\delta_{x_1} + \cdots + \delta_{x_n}) \Big) \right) 
\leq \frac{1}{n} \min_{\sigma \in S_n} \sum_{i=1}^{n} d(x_i,y_{\sigma(i)}).$$
Thus, denoting by $P_{\mathbb{Q}}(X)$ the 
set of atomic probability measures on $X$ all of whose atoms have rational mass, 
we have a well-defined map \esp $bar^{\star} \!: P_{\mathbb{Q}}(X) \to X$, \esp 
and  the previous inequality translates into that this map is 1-Lipschitz 
for the 1-Wasserstein metric: for all $\mu_1,\mu_2$ in $P_{\mathbb{Q}}(X)$, 
\begin{equation}\label{contract}
d \big( bar^{\star}(\mu_1), bar^{\star}(\mu_2) \big) \leq W_1 (\mu_1,\mu_2).
\end{equation}
If $X$ is separable, then it is known that $P_{\mathbb{Q}}(X)$ is $W_1$-dense 
in $P^1(X)$. We may hence extend the map \esp $bar^{\star}$ \esp to all $P^1(X)$ 
so that (\ref{contract}) holds for all $\mu_1,\mu_2$ in $P^1(X)$. This concludes 
our construction.  

\vspace{0.1cm}

\begin{rem} \label{raro} It is worth pointing out that for CAT(0)-spaces, 
$bar^{\star}$ does not necessarily coincide with the Cartan barycenter. 
Indeed, for the example illustrated by Figure 1, the Cartan barycenter of 
the measure \esp $\frac{\delta_x}{2} + \frac{\delta_y}{4} + \frac{\delta_z}{4}$ 
is the origin, though the barycenter $bar^{\star}$ of this measure lies on the 
axis joining the origin to $x$ (see the final remark of \cite[Section I.2]{EH}).
\end{rem}

\vspace{0.1cm}

To close this section, we next give a proof of Proposition \ref{prop-cauchy}. Let us 
mention that this proposition is also proved in \cite{EH} by means of a quite indirect 
argument that uses a deep martingale theorem and requires $X$ to satisfy a weak 
local-compactness property. Although this very elegant approach does not seem 
to be the most appropriate one in view of the purely geometric nature of the 
statement, the reader will still recognize a certain probabilistic flavor 
in our computations below. The key estimate for the distance between 
the barycenters of $Q^k$ and $Q^{k+l}$ is provided by the next

\vspace{0.1cm}

\begin{lem} \label{two} 
{\em For every \esp $1/2 < \alpha < 2/3$, 
\esp there exists a constant $C = C(\alpha) > 0$ and $L \gg 1$ 
such that for all positive integers \esp $l,k$ \esp 
satisfying $L \leq l \leq \sqrt{k}$, one has 
\begin{equation}\label{dure}
d \big(bar_{nk} (Q^k), bar_{n(k+l)}(Q^{k+l}) \big) 
\leq C D \esp \frac{l^{3 \alpha - 1}}{k},
\end{equation}
where $D$ denotes the diameter of the set \esp $\{x_1,\ldots,x_n\}$. 
Moreover, for $0 \leq l \leq L$, one still has the weaker estimate}
\begin{equation}\label{facil}
d \big(bar_{nk} (Q^k), bar_{n(k+l)}(Q^{k+l}) \big) \leq D \esp \frac{l}{k}.
\end{equation}
\end{lem}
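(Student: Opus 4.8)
The plan is to prove the two displayed estimates separately; $(\ref{facil})$ is elementary and also feeds into the proof of $(\ref{dure})$.

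\emph{The easy estimate $(\ref{facil})$.} Everything rests on the one-step bound
$$d\big( bar_{m}(x_1,\dots ,x_m),\, bar_{m-1}(x_1,\dots ,x_{m-1}) \big)\ \le\ \frac{ diam\{x_1,\dots ,x_m\} }{m-1}.$$
To see it, run one step of the iterative construction of $bar_{m}(x_1,\dots ,x_m)$: the new point in the last slot is exactly $b:=bar_{m-1}(x_1,\dots ,x_{m-1})$, and by the permutation-invariant form of $(\ref{needed-estimate})$ each new point $x_i^{(1)}=bar_{m-1}(x_1,\dots ,\widehat{x_i},\dots ,x_m)$ satisfies $d(x_i^{(1)},b)\le\frac{1}{m-1}\,d(x_i,x_m)\le\frac{1}{m-1}diam\{x_1,\dots ,x_m\}$. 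Closed balls of a Buseman space are convex (because $t\mapsto d(\gamma(t),b)$ is convex along every geodesic $\gamma$), so the convex closure of $\{x_1^{(1)},\dots ,x_m^{(1)}\}$ — hence the point $bar_{m}(x_1,\dots ,x_m)$ to which it collapses — stays in the ball of radius $\frac{1}{m-1}diam\{x_1,\dots ,x_m\}$ around $b$. Since $Q^{k+l}$ is $Q^{k}$ followed by $Q^{l}$ and every intermediate list met while adjoining the $nl$ new points has diameter $\le D$, telescoping the one-step bound over $m=nk+1,\dots ,n(k+l)$ gives $d\big(bar_{nk}(Q^k),bar_{n(k+l)}(Q^{k+l})\big)\le\sum_{m=nk+1}^{n(k+l)}\frac{D}{m-1}\le\frac{nl}{nk}\,D=D\,\frac lk$, which is $(\ref{facil})$.

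\emph{Reduction for $(\ref{dure})$.} One step of the iterative construction computing $bar_{nm}(Q^m)$ sends a configuration with block structure $(y_1^{\times m},\dots ,y_n^{\times m})$ to $(\tilde y_1^{\times m},\dots ,\tilde y_n^{\times m})$, where $\tilde y_v=bar_{nm-1}\big(y_1^{\times m},\dots ,y_v^{\times(m-1)},\dots ,y_n^{\times m}\big)$; hence $bar_{nm}(Q^m)$ is obtained by iterating a map $F_m\colon X^n\to X^n$ whose unique attracting fixed point is the diagonal $(a_m,\dots ,a_m)$, $a_m:=bar_{nm}(Q^m)$, and the whole dynamics lives on the $F_m$-invariant set $K^n$, $K$ being the (diameter-$D$, by Lemma \ref{one}) convex closure of $\{x_1,\dots ,x_n\}$. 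From $(\ref{needed-estimate})$ one reads off: $F_m$ does not increase the $\ell^1$-distance on $X^n$ and contracts diameters by a factor $\le(nm-1)^{-1}$; and $\|F_m-F_{m+1}\|\le CD/m$ on $K^n$, with $F_m$ and $F_{m+1}$ \emph{agreeing} on diagonal configurations (the barycenter of a constant family being that constant). Thus passing from $Q^k$ to $Q^{k+l}$ means following the fixed point through a chain of $l$ perturbations $F_k\to F_{k+1}\to\dots\to F_{k+l}$, each of size $\lesssim D/k$.

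\emph{The crux, and the main obstacle.} Because each perturbation $F_m\rightsquigarrow F_{m+1}$ vanishes on the diagonal — precisely where the fixed point sits — the naive estimate ``total displacement $\le\sum_m\|F_m-F_{m+1}\|$'' merely reproduces the wasteful $D\,l/k$ of $(\ref{facil})$. Beating it requires tracking the \emph{first-order} displacement produced by each perturbation and showing that, thanks to the symmetry of $Q^m$ together with the convexity inequality $(\ref{media})$, these displacements largely cancel along a run of $l$ consecutive indices — a purely metric stand-in for the square-root cancellation of a martingale with increments of size $\sim D/k$. Concretely, one would group the $l$ indices into about $l^{1-\theta}$ blocks of length about $l^{\theta}$, harvest inside each block an $l^{\theta/2}$-type cancellation by iterating $(\ref{media})$ and $(\ref{needed-estimate})$, and sum over blocks by the triangle inequality, arriving at roughly $l^{1-\theta}\cdot l^{\theta/2}\cdot D/k=l^{1-\theta/2}\,D/k$; the target exponent is $1-\theta/2=3\alpha-1$, that is $\theta=4-6\alpha$, which lies in $(0,1)$ exactly because $\tfrac12<\alpha<\tfrac23$. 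The hypothesis $l\le\sqrt k$ is what keeps the second-order (nonlinearity) errors inside each block of lower order than the main term, and having to leave room for them is what degrades the optimal exponent $\tfrac12$ to $3\alpha-1$, with $C=C(\alpha)\to\infty$ and the threshold $L=L(\alpha)\to\infty$ as $\alpha\downarrow\tfrac12$. Making this cancellation rigorous with only Buseman convexity at one's disposal — the geometric substitute for the martingale argument of \cite{EH} — is the technical heart of the lemma and the step I expect to be the main obstacle.
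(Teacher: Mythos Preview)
Your argument for $(\ref{facil})$ via the one-step bound and telescoping is correct and clean; the paper instead derives it from Lemma~\ref{three}, but the two routes are essentially equivalent.

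For $(\ref{dure})$, however, there is a genuine gap: you set up a fixed-point perturbation picture and then honestly say that the crucial cancellation step is ``the main obstacle'' --- but you never carry it out. The block-decomposition heuristic you sketch is suggestive, yet in a bare Buseman space there is no linear structure in which ``displacements cancel'', and the convexity inequality $(\ref{media})$ by itself does not produce the square-root gain you need. So as written the proof of $(\ref{dure})$ is missing its core.

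The paper's route is quite different and bypasses this obstacle entirely. Rather than tracking the chain $F_k\to F_{k+1}\to\cdots\to F_{k+l}$, it uses Lemma~\ref{three} once with $m=n(k+l)$ and $nl$ deletions: the distance $d\big(bar_{nk}(Q^k),bar_{n(k+l)}(Q^{k+l})\big)$ is bounded by the average, over all ways of deleting $nl$ entries from $Q^{k+l}$, of the distance from $bar_{nk}(Q^k)$ to the barycenter of the remaining $nk$ entries. If in a given deletion one removes $i_j$ copies of $x_j$ (so $i_1+\cdots+i_n=nl$), this sub-barycenter is within $\frac{D}{kn}\sum_j|i_j-l|$ of $bar_{nk}(Q^k)$ by $(\ref{needed-estimate})$. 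By symmetry the whole thing reduces to $\frac{D}{k}\,\mathbb{E}\,|i_1-l|$, where $i_1$ has the hypergeometric law $\mathbb{P}(i_1=i)=\binom{k+l}{i}\binom{(n-1)(k+l)}{nl-i}\big/\binom{n(k+l)}{nl}$. The ``cancellation'' you were looking for is now just the concentration of this hypergeometric variable around its mean $l$: splitting at $|i-l|=l^{\alpha}$ and estimating the tail via Stirling's formula and a Gaussian comparison gives $\mathbb{E}\,|i_1-l|\lesssim l^{\alpha}+l\cdot l^{-(2-3\alpha)}$, which is the source of the exponent $3\alpha-1$. The hypothesis $l\le\sqrt{k}$ is used to control the error in the Stirling/Gaussian approximation. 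In short, the missing idea is Lemma~\ref{three}, which turns the metric problem into a purely combinatorial expectation and makes the square-root cancellation a matter of explicit binomial estimates rather than a delicate geometric argument.
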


\vspace{0.15cm}

Assuming that this lemma holds, let us prove Proposition \ref{prop-cauchy}. Given 
$\varepsilon > 0$, fix an integer $k_{\varepsilon} \geq \max \{L,10\}$ such that
$$\frac{D}{k_{\varepsilon}} + 
\frac{3^{3-3\alpha}CD}{(2-3\alpha)(k_{\varepsilon}-1)^{2-3\alpha}} 
< \varepsilon,$$
where $C$ is the constant provided by Lemma \ref{two}.  
For any $k_1 < k_2$ larger than $k_{\varepsilon}$, define the sequence $(\ell_j)$ 
by $\ell_1 := k_{\varepsilon}^2$ and $\ell_{j+1} := \ell_j + [\sqrt{\ell_j}]$. 
One readily checks by induction that \esp $\ell_j \geq (k_{\varepsilon}+j)^2 / 9$ 
\esp holds for all $j \geq 1$. Choose $m \geq 1$ such that 
$\ell_{m} < k_2 \leq \ell_{m+1}$. By Lemma \ref{two}, 
$$d \big( bar_{n \ell_j} (Q^{\ell_{j}}), bar_{n \ell_{j+1}} (Q^{\ell_{j+1}}) \big) 
\leq
CD \frac{\big[ \ell_{j+1} - \ell_j \big]^{3\alpha-1}}{\ell_j} 
\leq
\frac{CD}{\ell_{j}^{\frac{3-3\alpha}{2}}}, \quad j = 1,2,\ldots,m-1.$$
Moreover, 
$$d \big( bar_{n \ell_m} (Q^{\ell_{m}}), bar_{n k_2} (Q^{k_2}) \big) 
\leq 
D \frac{[\ell_{m+1} - \ell_m]}{\ell_m}
\leq
\frac{D}{\ell_{m}^{1/2}}.$$
Using the triangle inequality, this yields
\begin{eqnarray*}
d \big( bar_{n k_1} (Q^{k_1}), bar_{n k_2} (Q^{k_2}) \big) 
&\leq& 
\frac{D}{\ell_m^{1/2}} + 
\sum_{j=1}^{m-1} \frac{CD}{\ell_j^{\frac{3-3\alpha}{2}}}\\ 
&\leq& 
\frac{D}{k_{\varepsilon}} + \sum_{j=1}^{\infty} 
\frac{3^{3-3\alpha}CD}{\big( k_{\varepsilon}+j \big)^{3-3\alpha}}\\
&\leq& 
\frac{D}{k_{\varepsilon}} + 
3^{3-3\alpha}CD \int_{k_{\varepsilon}-1}^{\infty} \frac{dx}{x^{3-3\alpha}}\\
&\leq& 
\frac{D}{k_{\varepsilon}} + \frac{3^{3-3\alpha}CD}{(2-3\alpha)(k_{\varepsilon}-1)^{2-3\alpha}} 
\hspace{0.15cm} < \hspace{0.15cm} \varepsilon,
\end{eqnarray*}
thus showing the Cauchy property.

\vspace{0.45cm}

It remains to prove Lemma \ref{two}. The starting remark is given by the next 

\vspace{0.15cm}

\begin{lem} \label{three} 
{\em Given integers $1 \leq l < m$ and points $x,y_1,\ldots,y_m$ 
in $X$, the distance between $x$ and $bar_m (y_1,\ldots,y_m)$ is smaller than 
or equal to the mean distance between $x$ and the points of the form \esp 
$bar_{m-l}(y_1,\ldots,\hat{y}_{i_1},\ldots,\hat{y}_{i_l},\ldots, y_m)$, 
\esp where $i_1,\ldots,i_l$ range over all possible choices of 
different values in $\{1,\ldots,m \}$ (and each weight equals 
$m(m-1) \cdots (m-l+1) = \frac{m!}{(m-l)!}$).}
\end{lem}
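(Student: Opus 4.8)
The plan is to build the averaging over all $l$-subsets out of $l$ successive single-deletion steps, exploiting the fact that each single-deletion average is already governed by the barycenter construction. The starting point is the case $l=1$: I claim that for any $x$ and any $y_1,\ldots,y_m$,
\begin{equation}\label{onestep}
d\big(x, bar_m(y_1,\ldots,y_m)\big) \leq \frac{1}{m}\sum_{i=1}^{m} d\big(x, bar_{m-1}(y_1,\ldots,\hat y_i,\ldots,y_m)\big).
\end{equation}
This is essentially immediate from the definition of $bar_m$: recall that $bar_m(y_1,\ldots,y_m)$ is the limit of the iterative scheme starting from $(y_1^{(0)},\ldots,y_m^{(0)}) = (y_1,\ldots,y_m)$, where $y_i^{(1)} = bar_{m-1}$ of the $m-1$ points obtained by deleting the $i$-th coordinate. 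Applying the triangle inequality and then the inductive use of Buseman convexity (as codified in Lemma \ref{one}: the barycenter of any finite family lies in the convex closure, and the distance from a fixed point $x$ to a barycenter is at most the maximal, hence also at most the average, of the distances to the constituent points — more carefully, one iterates \eqref{media} in the form $d(x,m')\le \tfrac12 d(x,x')+\tfrac12 d(x,y')$ through the Cauchy scheme defining $bar_{m-1}$), one gets that $d(x,y_i^{(1)})\le$ the average of $d(x,y_j)$ over $j\ne i$ for each $i$; summing over $i$ and passing to the limit $y_i^{(k)}\to bar_m(y_1,\ldots,y_m)$ yields \eqref{onestep}. I should state and prove \eqref{onestep} as the first paragraph of the argument, since it is the only genuinely geometric input.

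Next I iterate. Applying \eqref{onestep} with $m$ replaced by $m-1$ to each term $d\big(x, bar_{m-1}(y_1,\ldots,\hat y_i,\ldots,y_m)\big)$ on the right of \eqref{onestep}, then with $m-2$, and so on down to $m-l+1$, I obtain after $l$ steps an estimate of $d(x, bar_m(y_1,\ldots,y_m))$ by a weighted average of the quantities $d\big(x, bar_{m-l}(y_1,\ldots,\hat y_{i_1},\ldots,\hat y_{i_l},\ldots,y_m)\big)$. The only remaining point is bookkeeping of the weights: after the first step each of the $m$ ordered single deletions appears with weight $1/m$; after the second step each ordered pair of deletions appears with weight $\frac{1}{m(m-1)}$; inductively, after $l$ steps each ordered $l$-tuple $(i_1,\ldots,i_l)$ of distinct indices appears with weight $\frac{(m-l)!}{m!}$, and the total number of such ordered tuples is $\frac{m!}{(m-l)!}$, so the weights sum to $1$ as they must. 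Since $bar_{m-l}(y_1,\ldots,\hat y_{i_1},\ldots,\hat y_{i_l},\ldots,y_m)$ depends only on the unordered set $\{i_1,\ldots,i_l\}$ (by the symmetry of the barycenter construction noted after Lemma \ref{one}), grouping the $l!$ orderings of each set gives exactly the weighting described in the statement, namely each unordered choice carries total weight $\frac{l!\,(m-l)!}{m!}=\binom{m}{l}^{-1}$, i.e. uniform over the $\binom{m}{l}$ subsets — the phrasing "each weight equals $\frac{m!}{(m-l)!}$" in the statement refers to the count of ordered tuples, which is the natural normalization emerging from the iteration.

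The main obstacle — and the step I would be most careful about — is the justification of \eqref{onestep}, specifically the passage "distance from $x$ to $bar_{m-1}$ of a family is at most the average of the distances from $x$ to the members of that family." This is true in a Buseman space but is not literally one application of \eqref{media}: one must run the same Cauchy-scheme argument used to construct $bar_{m-1}$, observing that the function $z\mapsto d(x,z)$ is convex along geodesics (so in particular $d(x,\cdot)$ evaluated at a midpoint is bounded by the average of the two endpoint values), and that this bound is stable under the iterative replacement, hence survives in the limit. Equivalently one can invoke that $bar_{m-1}$ lies in the convex closure and combine this with the (Buseman-)convexity of $d(x,\cdot)$ on that convex closure; I would include a short self-contained paragraph to this effect rather than leaving it implicit, as the rest of the lemma is pure combinatorics of the weights.
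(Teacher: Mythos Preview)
Your proposal is correct and follows the same inductive scheme as the paper (base case $l=1$, then iterate the single-deletion estimate $l$ times with the obvious bookkeeping of weights). The paper's treatment of the base case is shorter than yours, however: it simply records the shift identity
\[
bar_m(y_1,\ldots,y_m)=bar_m\big(bar_{m-1}(y_1,\ldots,\hat y_i,\ldots,y_m),\ i=1,\ldots,m\big),
\]
which is immediate from the definition of $bar_m$ as the limit of the iterative scheme, and then applies inequality~(\ref{needed-estimate}) once with the first $m$-tuple taken to be $(x,\ldots,x)$; this yields your one-step inequality in a single stroke. In particular, the concern in your final paragraph is unnecessary: the auxiliary fact ``$d(x,bar_{m-1}(\cdot))\le$ average of the $d(x,\cdot)$'' is exactly (\ref{needed-estimate}) specialized to a constant first tuple (since $bar_{m-1}(x,\ldots,x)=x$), so no separate passage through the Cauchy scheme or Lemma~\ref{one} is required. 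With that simplification your argument and the paper's coincide.
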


\begin{proof} For $l=1$, this follows as an 
application of (\ref{needed-estimate}) to 
$$bar_m (y_1,\ldots,y_m) = 
bar_m \big( bar_{m-1}(y_1,\ldots,\hat{y}_i,\ldots,y_m), \esp i=1, \ldots ,m \big).$$ 
The general case easily follows by an inductive argument using again 
(\ref{needed-estimate}). \end{proof}

\vspace{0.3cm}

The idea of the proof of Lemma \ref{two} consists in viewing the process of 
``reduction of coordinates'' for passing from $Q^{k+l}$ to $Q^k$ 
as a random process, which should imitate a Bernoulli trial for 
large values of $k \gg l$ (this process 
has an hypergeometric multivariate distribution). For each index $j$, 
the final associated error ({\em i.e.} the difference between $l$ and 
the number of deleted entries $x_j$) should be --in mean-- much smaller than 
$ln$. This allows passing from the elementary though useless upper bound \esp 
$\sim \! Dl/k$ \esp for the distance between the barycenters to the 
much better upper bound \esp $\sim \! CDl^{3\alpha - 1} /k$.

\vspace{0.35cm}

\noindent{\em Proof of Lemma \ref{two}.} As explained above, 
estimate (\ref{facil}) follows as a direct application 
of Lemma \ref{three}, so let us concentrate on (\ref{dure}). 
Lemma \ref{three} again implies that the distance from 
$bar_{nk} (Q^k)$ to $bar_{n(k+l)}(Q^{k+l})$ is smaller than or equal 
to the mean of the distance between $bar_{nk} (Q^k)$ and the points 
$bar_{nk}(y_1,\ldots,y_{kn})$, where $(y_1,\ldots,y_{kn})$ ranges over 
all families that coincide with $Q^{k+l}$ except for the deletion of $ln$ 
entries. Among these families, the number of those for which the deleted 
entries correspond to a $x_j$-position a number of times equal to $i_j$ 
(with $i_1 + \cdots + i_n = nl$) is
$${k+l \choose i_1} {k+l \choose i_2} \cdots {k+l \choose i_n}.$$
Moreover, the distance from the barycenter of such a family to $bar(Q^k)$ 
is smaller than or equal to 
$$\frac{D}{kn} \big( |i_1 - l| + |i_2 - l| + \cdots + |i_n - l| \big).$$
By Lemma \ref{three}, this implies that \esp 
$d \big( bar_{nk} (Q^k), bar_{n(k+l)} (Q^{k+l}) \big)$ 
\esp is smaller than or equal to 
\begin{multline*}
\frac{D}{kn} \sum_{i_1 + \cdots + i_n = nl} 
\frac{{k+l \choose i_1} {k+l \choose i_2} \cdots {k+l \choose i_n}}{{n(k+l) \choose nl}} 
\big( |i_1 - l| + |i_2 - l| + \cdots + |i_n - l| \big) \\
= \frac{D}{k} \sum_{i=0}^{nl} 
\frac{{k+l \choose i}{(n-1)(k+l) \choose nl-i}}{{n(k+l) \choose nl}} |i-l| 
\esp \esp = \esp \esp \frac{D}{k} \sum_{i=0}^{nl - l} 
\frac{{k+l \choose l+i}{(n-1)(k+l) \choose nl-l-i}}{{n(k+l) \choose nl}} i 
\esp + \esp \frac{D}{k} \sum_{i=0}^{l} 
\frac{{k+l \choose l-i}{(n-1)(k+l) \choose nl-l+i}}{{n(k+l) \choose nl}} i.    
\end{multline*}
We will estimate the first of the two sums above, leaving to the reader the task 
of carrying out analogous computations for the second sum. First, notice that 
\begin{eqnarray*}
\frac{D}{k} \sum_{i=0}^{nl - l} 
\frac{{k+l \choose l+i}{(n-1)(k+l) \choose nl-l-i}}{{n(k+l) \choose nl}} i 
&=& \frac{D}{k} \sum_{i=0}^{l^{\alpha}} 
\frac{{k+l \choose l+i}{(n-1)(k+l) \choose nl-l-i}}{{n(k+l) \choose nl}} i 
\esp + \esp \frac{D}{k} \sum_{i=l^{\alpha}}^{nl - l} 
\frac{{k+l \choose l+i}{(n-1)(k+l) \choose nl-l-i}}{{n(k+l) \choose nl}} i\\
&\leq& \frac{D l^{\alpha}}{k} \sum_{i=0}^{l^{\alpha}} 
\frac{{k+l \choose l+i}{(n-1)(k+l) \choose nl-l-i}}{{n(k+l) \choose nl}} 
\esp + \esp \frac{D (nl - l)}{k} \sum_{i=l^{\alpha}}^{nl - l} 
\frac{{k+l \choose l+i}{(n-1)(k+l) \choose nl-l-i}}{{n(k+l) \choose nl}}\\
&\leq& \frac{D l^{\alpha}}{k} \esp + \esp \frac{D (n-1) \esp l}{k}
\Big( 1 - \sum_{i=0}^{l^{\alpha}} 
\frac{{k+l \choose l+i}{(n-1)(k+l) \choose nl-l-i}}{{n(k+l) \choose nl}} \Big).
\end{eqnarray*}
The proof will then follow from an estimate of the form
\begin{equation}\label{ultima-reduccion}
1 - \sum_{i=0}^{l^{\alpha}} 
\frac{{k+l \choose l+i}{(n-1)(k+l) \choose nl-l-i}}{{n(k+l) \choose nl}} 
\leq \frac{C}{l^{2-3\alpha}}.
\end{equation}
To show this, first rewrite 
\begin{small}
$$\frac{{k+l \choose l+i}\!{(n-1)(k+l) \choose nl-l-i}}{{n(k+l) \choose nl}} 
\!=\! \frac{{k+l \choose l}\!{(n-1)(k+l) \choose (n-1)l}}{{n(k+l) \choose nl}} 
\frac{k(k-1) \!\cdot\!\cdot\!\cdot\! (k-i+1)}{(l+1) (l+2) 
\!\cdot\!\cdot\!\cdot\! (l+i)} \frac{((n-1)l) ((n-1)l-1) \!\cdot\!\cdot\!\cdot\! 
((n-1)l-i+1)}{((n-1)k) ((n-1)k+1) \!\cdot\!\cdot\!\cdot\! ((n-1)k+i)}.$$ 
\end{small}Now, using the improved version of Stirling's inequality 
(see \cite[Chapter II.9]{feller})
$$\sqrt{2 \pi m} \Big( \frac{m}{e} \Big)^m e^{\frac{1}{12m+1}} \leq 
m! \leq \sqrt{2 \pi m} \Big( \frac{m}{e} \Big)^m e^{\frac{1}{12m}},$$
one easily checks that for a certain \esp 
$e^{\frac{9}{12(l+1)}} \leq \lambda \leq e^{\frac{9}{12 l}}$,  
\begin{equation}\label{choose}
\frac{{k+l \choose l+i}{(n-1)(k+l) \choose (n-1)l-i}}{{n(k+l) \choose nl}} 
= \lambda \esp \sqrt{\frac{(k+l) \esp n}{2 \pi kl (n-1)}} 
\geq \lambda \esp \sqrt{\frac{\esp n}{2 \pi l (n-1)}}.
\end{equation}
On the other hand, choosing $L \gg 1$ and $c > 0$ such that \esp 
$\big| \log(1+x) - x \big| \leq  cx^2$ \esp holds for all 
$|x| \leq 1/L^{2-2\alpha}$, for all $l \geq L$ we have  
\begin{multline*}
\log \left( \frac{k(k-1) \cdots (k-i+1)}{(l+1) (l+2) \cdots (l+i)} 
\frac{((n-1)l) ((n-1)l-1) \cdots ((n-1)l-i+1)}{((n-1)k) ((n-1)k+1) \cdots 
((n-1)k+i)} \right) = \\
= \log \left( \frac{(1-\frac{1}{k}) \cdots (1-\frac{i-1}{k})}
{(1+\frac{1}{l}) (1+\frac{2}{l}) \cdots (1+\frac{i}{l})} 
\frac{(1- \frac{1}{(n-1)l}) \cdots (1-\frac{i-1}{(n-1)l})}
{(1 + \frac{1}{(n-1)k}) \cdots (1 + \frac{i}{(n-1)k})} \right) \\
\geq - \sum_{m=1}^{i-1} \frac{m}{(n-1)l} - \sum_{m=1}^{i} \frac{m}{l} 
- \sum_{m=1}^{i-1} \frac{m}{k} - \sum_{m=1}^{i} \frac{m}{(n-1)k} 
- 2c \Big( \frac{i^3}{l^2} \Big) - 2c \Big( \frac{i^3}{k^2} \Big)\\
\geq - \frac{i^2 n}{2 l (n-1)} - \frac{i(n-2)}{2l(n-1)} - 4cl^{3\alpha-2}. 
\end{multline*}
Putting this together with (\ref{choose}) and using the inequality \esp 
$1-x \leq e^{-x}$, \esp one easily concludes that the expression 
$$\sum_{i=0}^{l^{\alpha}} 
\frac{{k+l \choose l+i}{(n-1)(k+l) \choose nl-l-i}}{{n(k+l) \choose nl}}$$ 
is larger than or equal to 
$$\left( 1 - \frac{\bar{C}}{l^{2-3\alpha}} \right) 
\sum_{i=0}^{l^{\alpha}} \sqrt{\frac{\esp n}{2 \pi l (n-1)}} 
\esp e^{- \frac{i^2 n}{2(n-1)l}}.$$
The involved series can obviously be compared with an integral:
\begin{eqnarray*}
\sum_{i=0}^{l^{\alpha}} \!\sqrt{\frac{\esp n}{2 \pi l (n-1)}} 
e^{- \frac{i^2 n}{2(n-1)l}}
\!\!\!&=&\!\!\! 
\sqrt{\frac{n}{2 \pi (n-1)}} 
\sum_{i=0}^{l^{\alpha}} \frac{e^{- \frac{i^2 n}{2(n-1)l}}}{\sqrt{l}}
\geq
\sqrt{\frac{\esp n}{2 \pi (n-1)}} \int_0^{l^{\alpha}} 
e^{-\frac{x^2 n}{2(n-1)l}} \esp dx\\ 
&\geq&\!\!\!
\frac{1}{\sqrt{2 \pi}} \!\int_{0}^{l^{\alpha} \sqrt{\frac{n}{(n-1)l}}} 
\! e^{-\frac{x^2}{2}} dx
= 1 \!-\! \int_{l^{\alpha}\sqrt{\frac{n}{(n-1)l}}}^{\infty} 
\!\! e^{-\frac{x^2}{2}} dx
\geq
1 - 2 e^{-\frac{\ell^{\alpha}}{2}\sqrt{\frac{n}{(n-1)l}}}.
\end{eqnarray*}
%\begin{eqnarray*}
%\sum_{i=0}^{l^{\alpha}} \sqrt{\frac{\esp n}{2 \pi l (n-1)}} 
%\esp e^{- \frac{i^2 n}{2(n-1)l}}
%&=& 
%\sqrt{\frac{\esp n}{2 \pi (n-1)}} 
%\sum_{i=0}^{l^{\alpha}} \frac{e^{- \frac{i^2 n}{2(n-1)l}}}{\sqrt{l}}\\
%&\geq& 
%\sqrt{\frac{\esp n}{2 \pi (n-1)}} \int_0^{l^{\alpha}} 
%e^{-\frac{x^2 n}{2(n-1)l}} \esp dx\\ 
%&\geq& 
%\frac{1}{\sqrt{2 \pi}} \int_{0}^{l^{\alpha} \sqrt{\frac{n}{(n-1)l}}} 
%e^{-\frac{x^2}{2}} dx\\
%&=& 1 - \int_{l^{\alpha}\sqrt{\frac{n}{(n-1)l}}}^{\infty} 
%e^{-\frac{x^2}{2}} dx\\
%&\geq& 
%1 - 2 e^{-\frac{\ell^{\alpha}}{2}\sqrt{\frac{n}{(n-1)l}}}.
%\end{eqnarray*}
Putting all of this together one easily obtains (\ref{ultima-reduccion}), 
which concludes the proof. $\hfill\square$ 

%%%%%%%%%%%%%%%%%%%%%%%%%%%%%%%%%%%%%%%%%%%%%%%%%%%%%%%%%%%%%%%%%%%%%%%%%%%%%%%%%%%%%%%%%%%%%%
\vspace{0.535cm}

\noindent{\bf An application: a fixed point theorem.} By construction, 
the map $bar^{\star}$ is equivariant under the action 
of isometries. As a consequence, every action of a compact group by isometries of a 
Buseman space has a fixed point. Indeed, the 
push-forward of the Haar measure along an orbit is an invariant probability measure for the 
action. By equivariance, the barycenter $bar^{\star}$ of this measure must remain fixed.

Despite the simple argument above, it is worth pointing out that 
a much stronger result holds: if a group action by isometries of a 
Buseman space has a (nonempty) compact 
invariant set, then it has a fixed point. (In particular, actions on a proper such space  
with bounded orbits must have fixed points.) Although the author was convinced that 
this was pretty well-known, according to the specialists it is apparently new, so 
we sketch the argument of proof below (the details are left to the reader). 

We will use the following construction. Given a compact subset $B$ of $X$, we let $B^*$ 
be the set of all midpoints between points of $B$ whose distance realizes the diameter. 
By Lemma \ref{one}, 
$$diam(B^*) \leq diam(B) =: D.$$ 
Moreover, if equality holds, then there are points $x_1,x_2,x_3,x_4$ in $B$ such that 
the distance between any of them equals $D$. Indeed, let $y,z$ in $B^*$ be such that 
$d(y,z) = D$. Let $x_1,x_2$ (resp. $x_3,x_4$) be points in $B$ such that $y$ (resp. $z$) 
is the midpoint between $x_1$ and $x_2$ (resp. $x_3$ and $x_4$) and \esp $d(x_1,x_2) = 
d(x_3,x_4) = D$. \esp Using
$$D = d(y,z) \leq \frac{d(x_1,x_3)}{2} + \frac{d(x_2,x_4)}{2} \leq D,$$
we conclude that $d(x_1,x_3) = d(x_2,x_4) = D$. Similarly, using 
$$D = d(y,z) \leq \frac{d(x_1,x_4)}{2} + \frac{d(x_2,x_3)}{2} \leq D,$$
we conclude that $d(x_1,x_4) = d(x_2,x_3) = D$.

The preceding argument easily allows to show the following generalization: starting with 
$B_1 := B$ of diameter $D$, define inductively $B_n := (B_{n-1})^*$. If $diam (B_N) = D$, 
then there exist $2^N$ points $x_1,\ldots, x_{2^N}$ in $B$ such that the distance between 
any of them equals $D$.

Assume now that $\Gamma$ acts on $X$ preserving a compact set $\hat{B}$. Compactness 
type arguments easily yield a compact invariant subset $B$ of $\hat{B}$ of minimal 
diameter $D$. We claim that $B$ is a single point (hence a fixed point for the 
action). Indeed, assume otherwise and cover $B$ by finitely many (say, $M$) 
open balls of radius $D/2$. Since all the $B_n$'s are also compact and 
invariant, the minimality of $D$ yields $diam(B_n) = D$ for all $n \geq 1$. 
Fix $N$ such that $2^N > M$. By the discussion 
above, there exists a sequence of points $x_1,\ldots,x_{2^N}$ in $B$ such that the distance 
between any of them equals $D > 0$. However, this is impossible by the choice of $N$.

%%%%%%%%%%%%%%%%%%%%%%%%%%%%%%%%%%%%%%%%%%%%%%%%%%%%%%%%%%%%%%%%%%%%%%%%%%%%%%%%%%%%%%%%%%%%

\section{The $L^1$ ergodic theorem}
\label{proof}

\hspace{0.5cm} To simplify, given $\varphi \!: \Omega \to X$, let us denote  
$$\mu_{n,\varphi}(\omega) := \frac{1}{m_G (F_n)} \int_{F_n} 
\delta_{\varphi(T^{^g} \omega)} \esp d m_G (g)$$
the $n^{th}$ {\em empirical measure} associated to $\varphi$. Notice that 
for all $\varphi,\psi$ in $L^1(\mathcal{P},X)$ and all $n \geq 1$,
\begin{small}\begin{multline*}
\int_{\Omega} d \Big( bar^{\star} \big(\mu_{n,\varphi} (\omega) \big), 
bar^{\star} \big(\mu_{n,\psi}(\omega) \big) \Big) \esp d \mathcal{P} (\omega)\\ 
=
\int_{\Omega} d \left( bar^{\star} \Big( \frac{1}{m_G (F_n)} \int_{F_n} 
\delta_{\varphi(T^{^g} \omega)} \esp d m_G (g) \Big), 
bar^{\star} \Big( \frac{1}{m_G (F_n)} \int_{F_n} \delta_{\psi(T^g \omega)} 
\esp d m_G (g) \Big) \right) \esp d \mathcal{P} (\omega)\\
\leq
\int_{\Omega} \frac{1}{m_G (F_n)} \int_{F_n} d \big( \varphi(T^g \omega), 
\psi (T^g \omega) \big) \esp d m_G(g) \esp d \mathcal{P} (\omega) \hspace{0.1cm} 
= \hspace{0.1cm} \int_{\Omega} d \big( \varphi (\omega), \psi (\omega) \big) 
\esp d \mathcal{P} (\omega), 
\end{multline*}\end{small}hence
\begin{equation}\label{final}
\int_{\Omega} d \Big( bar^{\star} \big(\mu_{n,\varphi} (\omega) \big), 
bar^{\star} \big(\mu_{n,\varphi}(\omega) \big) \Big) \esp 
d \mathcal{P} (\omega) \esp \leq \esp d_1 (\varphi,\psi).
\end{equation}

\vspace{0.1cm}

To prove the Main Theorem, let us first assume that $\varphi$ takes values in a finite 
set, say $\{x_1,\ldots,x_k\}$, and let $\Omega_i$ be the preimage of $\{x_i\}$ under 
$\varphi$. A direct application of Lindenstrauss' ergodic theorem \cite{lindenstrauss} 
to the characteristic function of $\Omega_i$ yields the existence 
almost everywhere of the following limit:
$$\lambda_i (\omega) := \lim_{n \to \infty} 
\frac{m_G \big( \{ g \in F_n \! : T^g \omega \in \Omega_i \} \big)}{m_G (F_n)}.$$
We claim that almost surely we have the convergence 
\begin{equation}\label{converge}
bar^{\star} (\mu_{n,\varphi}) \longrightarrow 
bar^{\star} \Big( \sum_{i=1}^k \lambda_i (\omega) \delta_{x_i} \Big).
\end{equation}
Indeed, since $bar^{\star}$ is 1-Lipschitz for $W_1$, given $\varepsilon > 0$ 
we have that for almost every $\omega \in \Omega$ there exists 
$n(\omega,\varepsilon) \geq 1$ such that for all 
$n \geq n(\omega,\varepsilon)$ the following holds:
\begin{small}\begin{eqnarray*}
d \left( bar^{\star} (\mu_{n,\varphi}), 
bar^{\star} \Big( \sum_{i=1}^k \lambda_i (\omega) \delta_{x_i} \Big) \right) 
&\leq& W_1 \left( \sum_{i=1}^k 
\frac{m_G \big( \{g \in F_n \!: T^g \omega \in \Omega_i \} \big)}
{m_G (F_n)} \delta_{x_i}, \sum_{i=1}^k \lambda_i(\omega) \delta_{x_i} \right) \\
&\leq& \sum_{i=1}^k \left| \frac{m_G \big( \{g \in F_n \!: T^g \omega \in \Omega_i \}\big)}
{m_G (F_n)} - \lambda_i(\omega) \right| \esp diam \{x_1,\ldots,x_k\}\\
&\leq& \varepsilon. 
\end{eqnarray*}
\end{small}This shows the convergence (\ref{converge}). Now 
notice that by construction, both $bar^{\star} (\mu_{n,\varphi})$ 
and $bar^{\star} \big( \sum_{i=1}^k \lambda_i (\omega) \delta_{x_i} \big)$ belong 
to the convex closure of $\{x_1,\ldots,x_k\}$. By Lemma \ref{one}, this implies 
that for all $n \!\geq\! 1$, the distance between these two points is less than 
or equal to  $diam \{x_1,\ldots,x_k\}$. A direct application of the 
dominated convergence theorem then shows that the convergence 
(\ref{converge}) also holds in $L^{1}(\mathcal{P},X)$.

\vspace{0.32cm}

In order to deal with the general case we will need the next

\vspace{0.1cm}

\begin{lem} \label{lem-maximal} 
{\em There exists a constant $C > 0$ (depending only on the sequence $(F_n)$) 
such that for all $\varphi,\psi$ in $L^1 (X,\mu)$ and all $\lambda > 0$,}
\begin{equation}\label{maximal}
\mathcal{P} \left[ \omega \in \Omega \!: \esp \sup_{n \geq 1}
d \big( bar^{\star} (\mu_{n,\varphi}(\omega)), bar^{\star} (\mu_{n,\psi}(\omega)) \big) 
\geq \lambda \right] \leq \frac{C}{\lambda} d_1 (\varphi,\psi).
\end{equation}
\end{lem}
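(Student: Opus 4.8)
The plan is to deduce (\ref{maximal}) from the Birkhoff--Lindenstrauss maximal inequality by using the $W_1$-contractivity of $bar^{\star}$ to reduce everything to a scalar maximal function. First I would bound, using (\ref{contract}) and the obvious ``diagonal coupling'' of $\mu_{n,\varphi}(\omega)$ and $\mu_{n,\psi}(\omega)$ coming from the identity on $F_n$,
\[
d \big( bar^{\star} (\mu_{n,\varphi}(\omega)), bar^{\star} (\mu_{n,\psi}(\omega)) \big)
\;\leq\; W_1 \big( \mu_{n,\varphi}(\omega), \mu_{n,\psi}(\omega) \big)
\;\leq\; \frac{1}{m_G(F_n)} \int_{F_n} d \big( \varphi(T^g\omega), \psi(T^g\omega) \big) \esp d m_G(g).
\]
Thus if we set $f(\omega) := d(\varphi(\omega),\psi(\omega))$, which lies in $L^1(\mathcal P)$ with $\|f\|_{L^1} = d_1(\varphi,\psi)$ (note that $d_1$ is literally the $L^1$-norm of this function, once one keeps track of the square-root convention in the paper's definition), the left-hand event is contained in the event that the averages of $f$ along the Følner sets $F_n$ have supremum $\geq \lambda$.

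Next I would invoke the maximal inequality that accompanies Lindenstrauss' pointwise ergodic theorem for tempered Følner sequences: there is a constant $C>0$, depending only on $(F_n)$ (through the temperedness constant), such that for every nonnegative $f \in L^1(\mathcal P)$ and every $\lambda > 0$,
\[
\mathcal P \left[ \omega : \sup_{n \geq 1} \frac{1}{m_G(F_n)} \int_{F_n} f(T^g\omega) \esp d m_G(g) \geq \lambda \right]
\;\leq\; \frac{C}{\lambda} \, \|f\|_{L^1(\mathcal P)}.
\]
Combining the two displays immediately yields (\ref{maximal}). I would remark that this weak-type $(1,1)$ bound is exactly the ingredient underlying Lindenstrauss' theorem (it is the transference of the Besicovitch-type covering lemma for tempered Følner sequences), so it is legitimate to cite \cite{lindenstrauss} for it; alternatively one can quote it in the form used in \cite{austin}.

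The only genuine subtlety — the ``main obstacle'', such as it is — is bookkeeping rather than mathematics: one must make sure the coupling used to pass from $d(bar^{\star}(\cdot),bar^{\star}(\cdot))$ to a pointwise average of $d(\varphi,\psi)$ is measurable in $\omega$ and is a legitimate element of $(\mu_{n,\varphi}(\omega) \mid \mu_{n,\psi}(\omega))$; this is clear because it is the pushforward of the normalized measure $\frac{1}{m_G(F_n)} m_G|_{F_n}$ under $g \mapsto (\varphi(T^g\omega), \psi(T^g\omega))$, and the first and second marginals are by definition $\mu_{n,\varphi}(\omega)$ and $\mu_{n,\psi}(\omega)$. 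One should also double-check that $\sup_{n}$ of a countable family of measurable functions is measurable, so the event in (\ref{maximal}) is indeed $\mathcal P$-measurable. With these routine points dispatched, the proof is a three-line chain of inequalities.
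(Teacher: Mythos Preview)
Your proposal is correct and follows essentially the same route as the paper: use the $W_1$-contractivity of $bar^{\star}$, bound $W_1(\mu_{n,\varphi},\mu_{n,\psi})$ via the diagonal coupling $\nu_n = \frac{1}{m_G(F_n)}\int_{F_n}\delta_{(\varphi(T^g\omega),\psi(T^g\omega))}\,dm_G(g)$, and then apply Lindenstrauss' maximal inequality to the scalar function $f(\omega)=d(\varphi(\omega),\psi(\omega))$. Your parenthetical about the square-root in the definition of $d_1$ is well spotted: the paper's own proof ends with $\frac{C}{\lambda}\int_\Omega d(\varphi,\psi)\,d\mathcal P$, so it is tacitly treating $d_1$ as the bare $L^1$-integral, and you are right to read it that way.
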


\begin{proof} Since $bar^{\star}$ is 1-Lipschitz for 
$W_1$, the set involved in the inequality above is contained in 
\esp $\big\{ \omega \in \Omega \!: \esp \sup_{n \geq 1}
W_1 \big( \mu_{n,\varphi}, \mu_{n,\psi} \big) \geq \lambda \big\}$. 
\esp Now, noticing that the measure 
$$\nu_n := \frac{1}{m_G (F_n)} 
\int_{F_n} \delta_{(\varphi(T^g\omega),\psi(T^g\omega))} \esp dm_G (g)$$
lies in \esp $(\mu_{n,\varphi} | \mu_{n,\psi})$, \esp we obtain 
$$W_1 (\mu_{n,\varphi},\mu_{n,\psi}) \leq \frac{1}{m_G (F_n)} \int_{F_n} 
d \big( \varphi(T^g\omega),\psi(T^g\omega) \big) \esp dm_G (g).$$ 
Thus, the left-side expression of (\ref{maximal}) is smaller 
than or equal to 
$$\mathcal{P} \left[ \omega \in \Omega \!: 
\esp \sup_{n \geq 1} \frac{1}{m_G (F_n)} \int_{F_n} 
d \big( \varphi(T^g\omega),\psi(T^g\omega) \big) \esp dm_G (g) 
\geq \lambda \right].$$
Now, a direct application of Lindenstrauss' maximal ergodic theorem 
(see \cite[Theorem 3.2]{lindenstrauss}) yields the existence of 
a constant $C > 0$ (depending only on $(F_n)$) such that this 
last probability is smaller than or equal to 
$$\frac{C}{\lambda} \int_{\Omega} d(\varphi(\omega),\psi(\omega)) 
\esp d \mathcal{P} (\omega),$$ 
as desired. \end{proof}

\vspace{0.15cm}

We may now proceed to complete the proof of the Main Theorem. Since 
$X$ is assumed to be separable, for each $\varphi \!\in\! L^1 (\mathcal{P},X)$ 
there exists a sequence of finite-valued functions 
$\varphi_k \!: \Omega \to X$ that converges to $\varphi$ in the $L^1$ sense. 
Thus, given $\varepsilon > 0$, we may fix $\psi := \varphi_{k_{\varepsilon}}$ 
such that \esp $d_1(\varphi,\psi) \leq \varepsilon^2$. \esp By (\ref{maximal}),  
$$\mathcal{P} \left[ \omega \in \Omega \!: \sup_{n \geq 1} 
d \Big( bar^{\star} \big( \mu_{n,\varphi}(\omega) \big), 
bar^{\star} \big( \mu_{n,\psi}(\omega) \big) \Big) \geq 
\varepsilon \right] \leq \frac{C}{\varepsilon} d_1 (\varphi,\psi) 
\leq C \varepsilon.$$
Since $bar^{\star}(\mu_{n,\psi})$ is known to converge almost everywhere, 
this inequality implies that on a set of measure at least \esp $1-C\varepsilon$,  
\esp the sequence $\big( bar^{\star} (\mu_{n,\varphi}(\omega)) \big)$ asymptotically 
oscillates by at most $2 \varepsilon$. Since this is true for all $\varepsilon > 0$, 
this shows that \esp $bar^{\star} (\mu_{n,\varphi}(\omega))$ \esp converges almost 
surely. 

Finally, to show the convergence in $L^1 (\Omega,X)$, 
just notice that by (\ref{final}), 
\begin{small}\begin{multline*}
\int_{\Omega} d \Big( bar^{\star} \big(\mu_{n,\varphi} (\omega) \big), 
bar^{\star} \big(\mu_{m,\varphi}(\omega) \big) \Big) \esp d \mathcal{P}(\omega)\\
\leq  
\int_{\Omega} \!\left[ 
d \Big(\! bar^{\star} \big(\mu_{n,\varphi}), bar^{\star} \big(\mu_{n,\varphi_k}) \!\Big) 
\!\!+\!
d \Big(\! bar^{\star} \big(\mu_{n,\varphi_k}), bar^{\star} \big(\mu_{m,\varphi_k}) \!\Big) 
\!\!+\!
d \Big(\! bar^{\star} \big(\mu_{m,\varphi_k}), bar^{\star} \big(\mu_{m,\varphi}) \!\Big) 
\! \right]  d \mathcal{P} (\omega)\\
\leq 2 d_1 (\varphi,\varphi_k) + \int_{\Omega}
d \Big( bar^{\star} \big(\mu_{n,\varphi_k}), bar^{\star} \big(\mu_{m,\varphi_k}) \Big) 
\esp d \mathcal{P} (\omega).
\end{multline*}
\end{small}For a given $\varepsilon > 0$, we may fix $k$ large enough 
so that \esp $d_1 (\varphi,\varphi_k) \leq \varepsilon /3$. \esp Since 
$\big( bar^{\star} (\mu_{n,\varphi_k}) \big)$ converges in $L^1(\mathcal{P},X)$ 
as $n$ goes to infinite, we may fix $n_{\varepsilon}$ so that for all $n,m$ 
larger than $n_{\varepsilon}$, 
$$\int_{\Omega} d \Big( bar^{\star} \big(\mu_{n,\varphi_k}), 
bar^{\star} \big(\mu_{m,\varphi_k}) \Big) \esp 
d \mathcal{P} (\omega) \leq \frac{\varepsilon}{3}.$$
Putting all of this together we obtain that for all $n,m$ larger than $n_{\varepsilon}$, 
$$\int_{\Omega} d \Big( bar^{\star} \big(\mu_{n,\varphi} (\omega) \big), 
bar^{\star} \big(\mu_{m,\varphi}(\omega) \big) \Big) \esp d \mathcal{P}(\omega) 
\leq \varepsilon.$$
Hence, $\big( bar^{\star} (\varphi_{n,\varphi}) \big)$ is a Cauchy sequence in 
$L^1 (\mathcal{P},X)$, as we wanted to show. 

%%%%%%%%%%%%%%%%%%%%%%%%%%%%%%%%%%%%%%%%%%%%%%%%%%%%%%%%%%%%%%%%%%%%%%%%%%%%%%%%%%%%%%%%%%%%%%%%%%%%

\begin{footnotesize}

%%%%%%%%%%%%%%%%%%%%%%%%%%%%%%%%%%%%%%%%%%%%%%%%%%%%%%%%%%%%%%%%%%%%%%%%%%%%%%%%%%%%%%%%%%%%%%%%%%%%

\vspace{0.3cm}

\noindent Andr\'es Navas\\

\noindent Dep. de Matem\'aticas, 
Fac. de Ciencia, Univ. de Santiago\\
 
\noindent Alameda 3363, Estaci\'on Central, Santiago, Chile\\

\noindent E-mail address: andres.navas@usach.cl\\

\end{footnotesize}


\begin{thebibliography}{Dillo 83}

\bibitem{austin} {\sc T. Austin.} A CAT(0)-valued pointwise ergodic 
theorem. To appear in  {\em Journal of Topology and Analysis}.  

%\bibitem{karen} {\sc L. Billera, S. Holmes \& K. Vogtmann.} 
%Geometry of the space of phylogenetic trees. {\em Advances 
%in Applied Math.} {\bf 27} (2001), 733-767.  

%\bibitem{bochi-navas} {\sc J. Bochi \& A. Navas.} A geometric path from zero Lyapunov 
%exponents to invariant sections for cocycles. Preprint (2011).

\bibitem{feller} {\sc W. Feller.} {\em An Introduction to Probability Theory and its 
Applications, Volume 1.} Wiley series in Probability and Mathematical Statistics (1950).

\bibitem{lindenstrauss} {\sc E. Lindenstrauss.} Pointwise theorems for amenable groups. 
{\em Invent. Math.} {\bf 146} (2001), 259-295.

\bibitem{EH} {\sc A. Es-Sahib \& H. Heinich.} Barycentre canonique pour un 
espace m\'etrique \`a courbure n\'egative. S\'eminaire de Probabilit\'es XXXIII. 
{\em Lecture Notes in Math.} {\bf 1709}, Springer, Berlin (1999), 355-370.

\bibitem{jost} {\sc J. Jost.} {\em Nonpositive curvature: geometric and analytic aspects.} 
Lectures in Math., ETH Z\"urich (1997). 

\bibitem{KL} {\sc A. Karlsson \& F. Ledrappier.} Noncommutative ergodic theorems. 
To appear in the {\em Bob Zimmer Festschrift} (B. Farb and D. Fisher eds).

\bibitem{sturm} {\sc K.-T. Sturm.} Probability measures on metric spaces of nonpositive 
curvature. In {\em Heat kernels and analysis on manifolds, graphs, and metric spaces}. 
{\em Contemp. Math.} {\bf 338} (2003), 357-390.

\bibitem{villani} {\sc C. Villani.} {\em Topics in Optimal Transportation.} 
Graduate Studies in Mathematics {\bf 58} (2003).

\end{thebibliography}
\end{document}